\documentclass[11pt]{article}
\usepackage{amsmath}
\usepackage{dsfont}
\usepackage{cmap}
\usepackage{amsmath,amssymb,amsthm,graphicx,mathrsfs}
\usepackage[numbers,sort&compress]{natbib}
\usepackage{color}
\usepackage{indentfirst}
\numberwithin{equation}{section}
\setlength{\oddsidemargin}{-0.1cm}
\setlength{\evensidemargin}{0.5in}
\setlength{\topmargin}{-1.5cm}
\setlength{\textheight}{24cm} \setlength{\textwidth}{16.5cm}

\setcounter{section}{0}
\newcommand{\beq}{\begin{equation}}
	\newcommand{\enq}{\end{equation}}

\newtheorem{Def}{Definition}[section]
\newtheorem{Lem}[Def]{Lemma}
\newtheorem{Thm}[Def]{Theorem}

\newcommand{\benu}{\begin{enumerate}}
	\newcommand{\beqa}{\begin{eqnarray}}
		\newcommand{\beqan}{\begin{eqnarray*}}
			\newcommand{\eay}{\end{array}}
		\newcommand{\edm}{\end{displaymath}}
	\newcommand{\eenu}{\end{enumerate}}
\newcommand{\eeq}{\end{equation}}
\newcommand{\eeqa}{\end{eqnarray}}
\newcommand{\eeqan}{\end{eqnarray*}}

\newcommand{\br}{\begin{Remark}}
\newcommand{\er}{\end{Remark}}

\newcommand{\bqa}{\begin{eqnarray}}
\newcommand{\eqa}{\end{eqnarray}}
\newcommand{\bqw}{\begin{eqnarray*}}
\newcommand{\eqw}{\end{eqnarray*}}

\newcommand{\bea}{\begin{array}{cc}}
\newcommand{\ena}{\end{array}}

\linespread{1.6}
\begin{document}
\begin{center}

{\large \bf
Strong attractors for the nonclassical diffusion equation with fading memory in time-dependent spaces}\\
\vspace{0.20in}Yuming Qin$^{1,\ast}$ $\ $ Xiaoling Chen$^{2}$ $\ $ Ke Wang$^{1}$\\
\end{center}
$^{1}$ Department of  Mathematics, Institute for Nonlinear Science, Donghua University, Shanghai 201620, P. R. China, \\
$^{2}$ Department of  Mathematics,  Donghua University, Shanghai 201620, P. R. China.\\
\vspace{3mm}

\begin{abstract}
	In this paper, we discuss the long-time behavior of solutions to the nonclassical diffusion equation with fading memory when the nonlinear term $f$ fulfills the polynomial growth of arbitrary order and the external force $ g(x)\in L^{2}(\Omega)$. In the framework of time-dependent spaces, we verify the existence and uniqueness of strong solutions by the Galerkin method, then we obtain the existence of the time-dependent global attractor $\mathscr{A}=\{A_t\}_{t\in \mathbb{R}}$ in $\mathcal{M}_t^1$.
\end{abstract}

\vspace{3mm} \hspace{4mm}{\bf Key words:} Strong solutions, Time-dependent global attractors, Nonclassical diffusion equation, Fading memory, Time-dependent spaces.

\vspace{3mm} \hspace{4mm}{\bf 2010 Mathematics subject classifications:} 35B41, 35D35, 35K57.

\section{Introduction}
\setcounter{equation}{0}
\let\thefootnote\relax\footnote{*Corresponding author.}
\let\thefootnote\relax\footnote{E-mails: yuming@dhu.edu.cn (Y. Qin), chenxl@mail.dhu.edu.cn (X. Chen), kwang@dhu.edu.cn (K. Wang)}
In this article, we investigate the following nonclassical diffusion equation with fading memory
\begin{equation}\label{eq1}
\left\{
\begin{aligned}
&u_t - \varepsilon(t)\Delta{u_t}-\Delta{u}-\int_0^{\infty} k(s) \Delta{u(t-s)} \,ds+f(u)=g(x),\ &&\text{in }\Omega \times (\tau,+ \infty),\\
&u(x,t)=0, && x\in\partial \Omega,\ t\in \mathbb{R}, \\
&u(x,t)=u_\tau(x,t), && x\in\Omega,\ t\leq \tau,
\end{aligned}
\right.
\end{equation}
where $\Omega$ is a bounded domain in $\mathbb{R}^n$ with smooth boundary $\partial \Omega$. For any $\tau\in \mathbb{R}$, $u_\tau:\Omega \times (-\infty,\tau]\rightarrow \mathbb{R}$ is a given function and $ g(x)\in L^{2}(\Omega)$ is an external force. We make some presumptions for function $ \varepsilon(t)$, nonlinear term $f$ and the memory term.

$(\mathbf{H}_1)$: Assume that $ \varepsilon(\cdot)\in C^{1}(\mathbb{R}) $ is a decreasing bounded function satisfying
\begin{equation}\label{c1}
\lim_{t \to +\infty}\varepsilon(t) =0.
\end{equation}
Especially, there exists a constant $ L>0 $, such that
\begin{equation}\label{c2}
\sup_{t\in \mathbb{R}}[\lvert\varepsilon(t)\rvert+ \lvert\varepsilon^{\prime}(t)\rvert] \leq L.
\end{equation}

$(\mathbf{H}_2)$: Assume that the nonlinear term $ f\in C^{1}(\mathbb{R}),\ f(0)=0 $ and fulfills the polynomial growth of arbitrary order
\begin{equation}\label{c3}
\beta_1\lvert s\rvert^p-\gamma_1\leq f(s)s\leq \beta_2\lvert s\rvert^p+\gamma_2,\ p\geq2,
\end{equation}
and meets the dissipation condition
\begin{equation}\label{c4}
f^{\prime}(s)\geq-l,
\end{equation}
where $\beta_i,\gamma_i(i=1,2)$ and $l$ are positive constants.\\
Let $F(s)=\int_{\tau}^{s}f(r)\,dr$, it's known from (\ref{c3}) that there are positive constants $\widetilde{\beta_i},\widetilde{\gamma_i}(i=1,2)$ such that
\begin{equation}\label{c5}
	\widetilde{\beta_1}\lvert s\rvert^p-\widetilde{\gamma_1}\leq F(s)\leq \widetilde{\beta_2}\lvert s\rvert^p+\widetilde{\gamma_2}.
\end{equation}

$(\mathbf{H}_3)$: Assume that for any $s>0$, $k(\cdot)\in C^{2}(\mathbb{R}^{+}),\ k(s)\geq 0,\ k^{\prime}(s)\leq 0$, and $k(\infty)=\lim\limits_{s \to \infty}k(s)=0$.
Besides, we assume that the function $\mu(s)=-k^{\prime}(s)$ and satisfies
\begin{equation}
\mu\in C^{1}(\mathbb{R}^{+})\cap L^{1}(\mathbb{R}^{+}),\  \mu(s)\geq 0,\ \mu^{\prime}(s)\leq 0,\ \forall s\in \mathbb{R}^{+},
\end{equation}
\begin{equation}
\mu^{\prime}(s)+\delta\mu(s)\leq 0,\ \forall s\in \mathbb{R}^{+},
\end{equation}
where $\delta$ is a positive constant.

Aifantis \cite{a} first proposed the classical diffusion equation in 1980, later he realized that the conductive medium's properties would also have an effect on the equation. As a result, he established the nonclassical diffusion equation by taking into account the conductive medium's viscosity, elasticity, pressure and other factors.

The viscoelasticity of the conductive medium is taken into account by the form of a memory term in Eq.\eqref{eq1}, which is an extension of the nonclassical diffusion equation used in fluid mechanics, solid mechanics and heat conduction theory (see \cite{a,lm,pg}). Under the influence of the convolution of function $\Delta u$ and memory kernel $k(\cdot)$, the pace of energy dissipation of the dynamical system corresponding to Eq.\eqref{eq1} is faster than the nonclassical diffusion equation without fading memory.

When $\varepsilon(t)$ is a positive constant independent of $t$, some conclusions have been achieved on the dynamical behavior of the solutions to Eq.\eqref{eq1} under different assumptions. For autonomous systems, Wang et al \cite{wyz} investigated the existence and regularity of global attractors for Eq.\eqref{eq1} in weak topological space and strong topological space  when nonlinearity meets the critical condition. Wang and Wang \cite{ww} considered the existence of trajectory and global attractors for Eq.\eqref{eq1} by applying the method presented by Chepyzhov and Miranville \cite{cm1,cm2}. Ma et al \cite{mxz} discussed the global attractors for Eq.\eqref{eq1} in an unbounded domain in $\mathbb{R}^3$ when nonlinearity is critical. Zhang et al \cite{zwg} added term $\lambda u$ to Eq.\eqref{eq1} and proved the existence of global attractors in the space $H^2(\Omega)\cap H_0^1(\Omega)\times L^{2}_{\mu}(\mathbb{R}^{+};H^2(\Omega)\cap H_0^1(\Omega))$ when the nonlinearity satisfies arbitrary polynomial growth. Qin and Dong \cite{qdmw} verified the strong global attractors for Eq.\eqref{eq1} in $H^2(\Omega)\cap H_0^1(\Omega)\times L^{2}_{\mu}(\mathbb{R}^{+};H^2(\Omega)\cap H_0^1(\Omega))$ by the condition $(C)$. For non-autonomous systems, Wang and Zhong \cite{wz} developed the results in \cite{wyz} and further obtained the existence of uniform attractors. Wang et al \cite{wjz} explored the long-time dynamical behavior of Eq.\eqref{eq1} in strong topological space when nonlinearity is critical. Li and Wang \cite{lw} obtained the existence and topological structure of a compact uniform attractor for Eq.\eqref{eq1} with subcritical nonlinearity by means of the technique of asymptotic regularity estimate.

However, handling the Eq.\eqref{eq1} becomes more challenging if $\varepsilon(t)$ is a bounded function that decreases monotonically. This is due to the fact that the classical semigroup theory is no longer entirely applicable, despite the external force $g$ being independent of $t$. The dynamical system generated by Eq.\eqref{eq1} becomes non-autonomous case owing to $\varepsilon(t)$. To this end, Plinio et al \cite{pdt} defined the solution operator as a family of maps acting on a time-dependent family of spaces $X_t$, in which the norm depends entirely on time $t$, and proposed the concept of time-dependent global attractors. Thereafter some scholars launched new researches based on the notation of time-dependent attractors. For example, Zhu et al \cite{zxz} showed the existence of time-dependent global attractors in $\mathcal{H}_t$ for a class of nonclassical reaction–diffusion equations with the forcing term $g(x)\in H^{-1}(\Omega)$ and the nonlinearity $f$ satisfying the polynomial growth of arbitrary $p-1(p\geq2)$ order. Tang et al \cite{tzl} gained the well-posedness of Eq.\eqref{eq1} by using the nonclassical method of Faedo-Galerkin and analytical techniques.

As far as we are aware, no one has ever study the strong attractors for the nonclassical diffusion equation with fading memory in time-dependent spaces when the nonlinear term $f$ fulfills the polynomial growth of arbitrary order. Hence, we shall investigate this matter. In order to solve the aforementioned issue, we must overcome some difficulties. On one hand, the existence of term $-\Delta{u_t}$ means that we cannot get higher regularity of the solution. Thus we cannot use the compact Sobolev embedding to verify the asymptotic compactness of the process. On the other hand, since Eq.\eqref{eq1} contains the function $\varepsilon(t)$ and the memory term, we need to build up a time-dependent space and introduce a new variable as done in \cite{d}, then the solution space becomes more complicated. In order to overcome these difficulties, we apply the contractive function method. Finally we can establish the existence of the time-dependent attractor in strong topological space.

In Section 2, we introduce some notions and preliminaries. In Section3, we show the definition of strong solution to Eq.\eqref{eq1}, and proved the existence and uniqueness of the strong solution by the Galerkin method under assumptions $(\mathbf{H}_1)$-$(\mathbf{H}_3)$. In Section4, we obatin the existence of an absorbing set and establish the existence of the strong attractor in time-dependent spaces.

\section{Notions and Preliminaries}
As in \cite{d}, we introduce a new variable which reflects the past history of $u$:
\begin{equation}
	\eta^{t}(x,s)=\int_0^{s} u(x,t-r)\,dr,\ s\geq 0,
\end{equation}
then 
\begin{equation}
	\eta^{t}_t(x,s)=u(x,t)-\eta^{t}_s(x,s),\ s\geq 0.
\end{equation}
Integrating by parts in $s$ and using assumption $k(\infty)=0$, we can rewrite Eq.\eqref{eq1} as
\begin{equation}\label{eq2.1}
	\left\{
	\begin{aligned}
		&u_t -\varepsilon(t)\Delta{u_t}-\Delta{u}-\int_0^{\infty} \mu(s)\Delta{\eta^{t}(s)}ds+f(u)=g(x),\\
		&\eta^{t}_t=-\eta^{t}_s+u,\\
	\end{aligned}
	\right.
\end{equation}
with initial boundary conditions
\begin{equation}\label{eq2.2}
	\left\{
	\begin{aligned}
		&u(x,t)=0,&& x\in\partial\Omega,\ t\geq\tau,\\
		&\eta^{t}(x,s)=0,&& (x,s)\in\partial\Omega\times \mathbb{R}^{+},\ t\geq \tau,\\
		&u(x,\tau)=u_\tau(x),&& x\in\Omega,\\
		&\eta^{\tau}(x,s)=\eta_\tau(x,s)=\int_0^{s} u_\tau(x,\tau-r)\,dr,\ && (x,s)\in \Omega\times \mathbb{R}^{+}.
	\end{aligned}
	\right.
\end{equation}
Additionally, we suppose the existence of two positive constants $\mathscr{R}$ and $\varrho\leq\delta$ such that
$$\int_0^{\infty} e^{-\varrho s}\Vert \nabla u(-s) \Vert^{2} \,ds\leq\mathscr{R}.$$

As in \cite{ps}, we denote the inner product and norm on $L^{2}(\Omega)$ by $\langle\cdot ,\cdot\rangle$ and $\Vert\cdot\Vert$, respectively. Let $A=-\Delta$ with domain $D(A)=H^{1}_0(\Omega)\cap H^{2}(\Omega)$. For $0\leq s \leq 2$, we consider a family of Hilbert spaces $H_s=D(A^{\frac{s}{2}})$, whose inner product and norm are defined as
\begin{gather*}
	\langle\cdot ,\cdot\rangle_{H_s}=\langle\cdot ,\cdot\rangle_{s}=\langle A^{\frac{s}{2}}\cdot ,A^{\frac{s}{2}}\cdot\rangle, \text{ and }\Vert\cdot \Vert_{H_s}=\Vert\cdot \Vert_{s}=\Vert A^{\frac{s}{2}}\cdot\Vert.
\end{gather*}
Obviously, $H_0=L^{2}(\Omega),\ H_1=H^{1}_0(\Omega) $ and $H_2=H^{1}_0(\Omega)\cap H^{2}(\Omega)$.\\
Particularly, we have the embeddings
\begin{center}
	$H_s\hookrightarrow H_r$, for $0\leq r<s\leq2$,
\end{center}
Now for $t\in \mathbb{R}$ and $0\leq\sigma\leq 1$, we introduce the time-dependent spaces $\mathcal{H}_t^{\sigma}=H_{\sigma+1}$, with the norm $$\Vert u\Vert^2_{\mathcal{H}_t^{\sigma}}=\Vert u\Vert^2_{\sigma}+\varepsilon(t)\Vert u\Vert^2_{\sigma+1}.$$
When $\sigma=0$, we can write $\Vert u\Vert^2_{\mathcal{H}_t^{0}}=\Vert u\Vert^2_{\mathcal{H}_t}$.\\
Due to \cite{cpt}, we have the compact embedding
$$\mathcal{H}_t^{\sigma}\hookrightarrow \mathcal{H}_t.$$

Then we consider the memory kernel $\mu(\cdot)$. Let $L^{2}_{\mu}(\mathbb{R}^{+};H_r)$ be the family of Hilbert spaces of functions $\varphi:\mathbb{R}^{+}\rightarrow H_r,\ 0<r<3$, endowed with the inner product and norm, respectively
$$\langle\varphi_1,\varphi_2\rangle_{\mu,r}=\int_0^{\infty} \mu(s)\langle\varphi_1,\varphi_2\rangle_{r} \,ds,$$
$$\Vert\varphi\Vert^2_{\mu,r}=\int_0^{\infty} \mu(s)\Vert\varphi\Vert^2_{r} \,ds.$$

Finally, the required time-dependent spaces can be described as $$\mathcal{M}_t^{\sigma}=\mathcal{H}_t^{\sigma}\times L^{2}_{\mu}(\mathbb{R}^{+};H_{\sigma+1})=H_{\sigma+1}\times L^{2}_{\mu}(\mathbb{R}^{+};H_{\sigma+1}),$$
with the norm $$\Vert z\Vert^2_{\mathcal{M}^\sigma_t}=\Vert (u,\eta^t)\Vert^2_{\mathcal{M}^\sigma_t}=\Vert u\Vert^2_\sigma+\varepsilon(t)\Vert u\Vert^2_{\sigma+1}+\Vert \eta^t\Vert^2_{\mu,\sigma+1}.$$
When $\sigma=0$, we can use $\mathcal{M}_t$ instead of $\mathcal{M}^0_t$.

Next we introduce some definitions and lemmas that are relevant to the problem we are studying. For $t\in \mathbb{R}$, let $X_t$ be a family of normed time-dependent spaces.
For every $t\in \mathbb{R}$, we introduce the $R$-ball of $X_t$,
$$\mathbb{B}_t(R)=\{z\in X_t:\Vert z\Vert_{X_t}\leq R\}.$$
We denote the Hausdorff semidistance of two (nonempty) sets $B,C\subset X_t$ by
$$\delta_t(B,C)=\sup_{x\in B}dist_{X_t}(x,C)=\sup_{x\in B}\inf_{y\in C}\Vert x-y\Vert_{X_t}.$$

\begin{Def} (\cite{cpt}) 
	A two-parameter family of mappings $\{ U(t,\tau):X_\tau \rightarrow X_t,\ t\geq\tau\in \mathbb{R}\}$ with following properties\\
	$(i)\ U(\tau,\tau)=Id$ is the identity map on $X_\tau,\ \tau\in \mathbb{R}$;\\
	$(ii)\ U(t,s)U(s,\tau)=U(t,\tau),\ \forall t\geq s\geq \tau$,\\
	is called a process.
\end{Def}

\begin{Def} (\cite{cpt}) 
	A family $\mathscr{C}=\{C_t\}_{t\in \mathbb{R}}$ of bounded sets $C_t \subset X_t$ is called uniformly bounded if there exists $R>0$ such that
	$$C_t \subset\mathbb{B}_t(R),\ \forall t\in \mathbb{R}.$$
\end{Def}

\begin{Def} (\cite{cpt}) 
	A uniformly bounded family $\mathscr{B}=\{B_t\}_{t\in \mathbb{R}}$ is called time-dependent absorbing set if for every $R>0$, there exists $t_0=t_0(R)$ such that $$U(t,\tau)\mathbb{B}_\tau(R)\subset B_t,\ for\ all\  t-\tau\geq t_0.$$
\end{Def}

\begin{Def} (\cite{cpt}) 
	A uniformly bounded family $\mathscr{K}=\{K_t\}_{t\in \mathbb{R}}$ is called pullback attracting if and only if the limit $$\lim_{\tau \to -\infty}\delta_t(U(t,\tau)C_\tau,K_t)=0$$
	holds for every uniformly bounded family $\mathscr{C}=\{C_t\}_{t\in \mathbb{R}}$ and every $t\in \mathbb{R}$.
\end{Def}

\begin{Def} (\cite{cpt}) 
	A time-dependent global attractor is defined as the smallest element of $\mathscr{A}=\{A_t\}_{t\in \mathbb{R}}$ such that\\
	(i) $A_t$ is compact in $X_t$;\\
	(ii) $\mathscr{A}$ is pullback attracting.
\end{Def}

\begin{Def} (\cite{cpt}) 
	We say that $\mathscr{A}=\{A_t\}_{t\in \mathbb{R}}$ is invariant if $$U(t,\tau)A_\tau=A_t,\ \forall t\geq \tau.$$
\end{Def}

\begin{Def} (\cite{zxz}) 
	The process $\{U(t,\tau)\}_{t\geq\tau}$ is said to be pullback asymptotically compact if for any $t\in \mathbb{R}$, any bounded sequence $\{x_n\}^\infty_{n=1}\subset X_{\tau_n}$ and any sequence $\{\tau_n\}^\infty_{n=1}$ with $\tau_n\rightarrow-\infty$ as $n\rightarrow\infty$, the sequence $\{U(t,\tau_n)x_n\}^\infty_{n=1}$ is precompact in $\{X_t\}_{t\in \mathbb{R}}$.
\end{Def}

\begin{Def}(\cite{zxz}) 
	Let $\{X_t\}_{t\in \mathbb{R}}$ be a family of Banach spaces and $\mathscr{C}=\{C_t\}_{t\in \mathbb{R}}$ be a family of uniformly bounded subset of $\{X_t\}_{t\in \mathbb{R}}$. We call a function $\psi^t_\tau(\cdot,\cdot)$, defined on $\{X_t\}_{t\in \mathbb{R}}\times\{X_t\}_{t\in \mathbb{R}}$, a contractive function on $C_\tau\times C_\tau$ if for fixed $t\in \mathbb{R}$ and any sequence $\{x_n\}^\infty_{n=1}\subset C_\tau$, there is a subsequence $\{x_{n_k}\}^\infty_{n=1}\subset\{x_n\}^\infty_{n=1}$ such that $$\lim_{k\to\infty}\lim_{l\to\infty}\psi^t_\tau(x_{n_k},x_{n_l})=0,\ for\ all\ t\geq\tau.$$
	We denote the set of all contractive functions on $C_\tau\times C_\tau$ by $\hat{C}(C_\tau)$.
\end{Def}

\begin{Lem}(\cite{wyz})\label{lem1}
	Let memory kernel $\mu(s)$ satisfies $(\mathbf{H}_3)$, then for any $\eta^t\in C(I;L^{2}_{\mu}(\mathbb{R}^{+};H_r)),0<r<3$, there exists a constant $\rho>0$, such that
	$$\langle\eta^t,\eta^t_s\rangle_{\mu,r}\geq \frac{\rho}{2}\Vert\eta^t\Vert^2_{\mu,r}.$$
\end{Lem}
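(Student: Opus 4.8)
The plan is to reduce the estimate to an integration by parts in the memory variable $s$, exactly as in the Dafermos formulation of materials with memory, and then exploit the exponential-type decay of $\mu$ encoded in $(\mathbf{H}_3)$. First I would write, using the definition of the pairing $\langle\cdot,\cdot\rangle_{\mu,r}$ and the chain rule,
\[
\langle\eta^t,\eta^t_s\rangle_{\mu,r}=\int_0^{\infty}\mu(s)\langle\eta^t(s),\eta^t_s(s)\rangle_{r}\,ds=\frac12\int_0^{\infty}\mu(s)\frac{d}{ds}\Vert\eta^t(s)\Vert^2_{r}\,ds .
\]
Integrating by parts in $s$, the boundary terms are $\tfrac12\mu(s)\Vert\eta^t(s)\Vert^2_{r}$ evaluated at $s=0$ and $s\to\infty$; the former vanishes because $\eta^t(0)=0$ by the very definition of the history variable, and the latter vanishes because $\mu(s)\to 0$ (as $\mu$ is nonnegative, nonincreasing and integrable) while $\Vert\eta^t(s)\Vert^2_{r}$ grows at most polynomially under the standing history bound. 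Hence
\[
\langle\eta^t,\eta^t_s\rangle_{\mu,r}=-\frac12\int_0^{\infty}\mu'(s)\Vert\eta^t(s)\Vert^2_{r}\,ds .
\]
Then invoking the pointwise inequality $\mu'(s)+\delta\mu(s)\le 0$ from $(\mathbf{H}_3)$, that is $-\mu'(s)\ge\delta\mu(s)$, yields
\[
\langle\eta^t,\eta^t_s\rangle_{\mu,r}\ge\frac{\delta}{2}\int_0^{\infty}\mu(s)\Vert\eta^t(s)\Vert^2_{r}\,ds=\frac{\delta}{2}\Vert\eta^t\Vert^2_{\mu,r},
\]
so the claim holds with $\rho=\delta$ (indeed with any $\rho\in(0,\delta]$).

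The step that needs care — and which I regard as the only real obstacle — is the rigorous justification of the integration by parts, since a generic element $\eta^t\in C(I;L^{2}_{\mu}(\mathbb{R}^{+};H_r))$ is merely continuous in $t$ and need not be differentiable in $s$, and $\mu$ may be singular at the origin. I would handle this by a density argument: establish the identity and the resulting inequality first for histories $\eta^t$ in a dense regular subclass of $L^{2}_{\mu}(\mathbb{R}^{+};H_r)$ (for instance $H_r$-valued maps that are $C^1$ in $s$, vanish at $s=0$, and are compactly supported in $s$), for which all boundary contributions genuinely vanish, and then pass to the limit, noting that the right-hand side $\Vert\eta^t\Vert^2_{\mu,r}$ is continuous in the $L^{2}_{\mu}(\mathbb{R}^{+};H_r)$ topology and that on the approximating class the left-hand side equals the quantity $-\tfrac12\int_0^\infty\mu'(s)\Vert\eta^t(s)\Vert_r^2\,ds$, which is likewise continuous in that topology. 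Since the hypotheses on $\mu$ here are identical to those in \cite{wyz}, one may alternatively just quote the corresponding lemma there; the short argument above is recorded for completeness.
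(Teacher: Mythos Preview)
The paper does not prove this lemma at all; it merely states it and cites \cite{wyz}. Your argument is the standard one in the Dafermos history framework and is correct, with the integration by parts in $s$ followed by the pointwise bound $-\mu'(s)\ge\delta\mu(s)$ from $(\mathbf{H}_3)$ being exactly the mechanism used in \cite{wyz} and the surrounding literature; your density remark correctly flags the only technical point, namely that $\eta^t_s$ and the boundary terms must be interpreted on a regular dense subclass before passing to the limit.
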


\begin{Lem}(\cite{wm}) \label{lem2}
Assume that $X,B$ and $Y$ are Banach spaces with $X\subset\subset B$ and $B\subset Y$. Let $f_n$ be bounded in $L^\infty([0,T];X)$ and  $\frac{\partial f_n}{\partial t}$ is bounded in $L^p([0,T];Y)\ (p>1)$. Then, $f_n$ is relatively compact in $C([0,T];B)$.
\end{Lem}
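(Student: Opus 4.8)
This statement is a version of the Aubin--Lions--Simon compactness lemma, and the plan is to argue as in the Arzel\`a--Ascoli theorem, but in $C([0,T];B)$: the bound on $\partial_t f_n$ will produce equicontinuity of $\{f_n\}$ in the weaker space $Y$, and this will be transferred back to $B$ by interpolation between $X$ and $Y$. Since $\{f_n\}_n$ is a sequence, it suffices to extract a subsequence converging in $C([0,T];B)$. The key ingredient will be the Ehrling inequality: because $X\subset\subset B$ and $B\subset Y$ with continuous embedding, for every $\varepsilon>0$ there is $C_\varepsilon>0$ with $\|v\|_B\le\varepsilon\|v\|_X+C_\varepsilon\|v\|_Y$ for all $v\in X$. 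I would prove it by contradiction, using the compactness of $X\hookrightarrow B$ and the continuity of $B\hookrightarrow Y$.

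Next I would pass, for each $n$, to the representative of $f_n$ that is absolutely continuous from $[0,T]$ into $Y$ (it exists since $\partial_t f_n\in L^p([0,T];Y)$). For $0\le t_1<t_2\le T$ Hölder's inequality (this is where $p>1$ is used) gives
$$\|f_n(t_2)-f_n(t_1)\|_Y\le\int_{t_1}^{t_2}\|\partial_t f_n(s)\|_Y\,ds\le|t_2-t_1|^{1-1/p}\,\|\partial_t f_n\|_{L^p([0,T];Y)},$$
so $\{f_n\}$ is uniformly Hölder continuous, hence equicontinuous, as a family of maps $[0,T]\to Y$; together with $\|f_n\|_{L^\infty([0,T];X)}\le M$ and $X\hookrightarrow Y$ it is uniformly bounded in $C([0,T];Y)$. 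Outside a common null set $N\subset[0,T]$ one has $\|f_n(t)\|_X\le M$ for every $n$, so picking a countable dense set $D\subset[0,T]\setminus N$, the set $\{f_n(t)\}_n$ is bounded in $X$ and hence, by $X\subset\subset B$, precompact in $B$ for each $t\in D$. A diagonal extraction then yields a subsequence, still denoted $f_n$, such that $f_n(t)$ converges in $B$ for every $t\in D$.

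It remains to upgrade this to a Cauchy property in $C([0,T];B)$. Fix $\eta>0$. Using Ehrling, choose $\varepsilon$ with $2M\varepsilon<\eta/6$; using the $Y$-equicontinuity, choose $\rho>0$ with $C_\varepsilon\,\rho^{1-1/p}\sup_n\|\partial_t f_n\|_{L^p([0,T];Y)}<\eta/6$; and cover $[0,T]$ by finitely many intervals of radius $\rho$ centred at points $t_1,\dots,t_N\in D$. For $m,n$ so large that $\|f_m(t_j)-f_n(t_j)\|_B<\eta/3$ for all $j$, and for $t\in[0,T]$ with $|t-t_j|<\rho$, the triangle inequality combined with
$$\|f_m(t)-f_m(t_j)\|_B\le\varepsilon\|f_m(t)-f_m(t_j)\|_X+C_\varepsilon\|f_m(t)-f_m(t_j)\|_Y\le 2M\varepsilon+C_\varepsilon\rho^{1-1/p}\|\partial_t f_m\|_{L^p}$$
(and the analogue for $n$) yields $\sup_{t\in[0,T]}\|f_m(t)-f_n(t)\|_B<\eta$. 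Thus $\{f_n\}$ is Cauchy, hence convergent, in the Banach space $C([0,T];B)$, proving the claimed relative compactness.

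I expect the main obstacle to be the bookkeeping in this last step — simultaneously controlling time increments of $f_n$ in $X$ (from the uniform bound), in $Y$ (from the derivative bound via Hölder), and matching these against the three thresholds coming from the Ehrling inequality — rather than any single deep fact; the only ingredient that is not completely elementary, the Ehrling inequality, is standard. One should also be mildly careful about choosing consistent representatives of the $f_n$ and the dense set $D$ so that the fixed-time values genuinely lie in $X$ with the uniform bound.
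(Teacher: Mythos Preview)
The paper does not supply a proof of this lemma at all: it is quoted verbatim from reference~\cite{wm} and used as a black box (it is the classical Aubin--Lions--Simon theorem, cf.\ Simon, \emph{Ann.\ Mat.\ Pura Appl.}\ 146 (1987), Corollary~4). So there is nothing in the paper to compare your argument against.

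Your plan is the standard and correct one: Ehrling's interpolation inequality, H\"older on $\partial_t f_n$ to obtain uniform $Y$-equicontinuity, diagonal extraction on a countable dense time set using $X\subset\subset B$, and a three-term Arzel\`a--Ascoli style Cauchy estimate to pass to $C([0,T];B)$. The only genuine soft spot is the one you already flagged: in the final step you invoke $\|f_m(t)-f_m(t_j)\|_X\le 2M$ for an \emph{arbitrary} $t\in[0,T]$, whereas the $L^\infty(I;X)$ bound gives $\|f_m(t)\|_X\le M$ only for $t\notin N$. To close this you should argue that the $Y$-continuous representative in fact lies in the closed ball $\overline{B_X(0,M)}$ for \emph{every} $t$: pick $t_k\to t$ with $t_k\notin N$; then $f_m(t_k)\to f_m(t)$ in $Y$, while $\{f_m(t_k)\}_k$ is bounded in $X$, hence (by $X\subset\subset B$) precompact in $B$, and any $B$-limit must coincide with $f_m(t)$. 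When $X$ is reflexive (the case in all applications in this paper, where $X=H_2$) weak lower semicontinuity of $\|\cdot\|_X$ immediately yields $\|f_m(t)\|_X\le M$; in the fully general Banach setting one can instead follow Simon's original time-averaging argument, which sidesteps pointwise values altogether. Either way, once this representative issue is pinned down your proof goes through.
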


\begin{Lem}(\cite{t}) \label{lem3}
Let $X$ and $Y$ be two Banach spaces such that $X\subset Y$ with a continuous injection. If a function $\phi$ belongs to $L^\infty([0,T];X)$ and is weakly continuous with values in $Y$, then $\phi$ is weakly continuous with values in $X$.
\end{Lem}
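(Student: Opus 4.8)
The plan is to establish this classical weak-continuity lemma (see \cite{t}) in two stages: first I would upgrade the $L^\infty$ bound in $X$ to a bound valid at \emph{every} $t\in[0,T]$, and then I would run a subsequence argument based on the weak sequential compactness of closed balls in $X$. Throughout, the proof relies on the reflexivity of $X$, which is available in all the applications made here (where $X$ is a Hilbert space, typically a product of the form $H_2\times L^{2}_{\mu}(\mathbb{R}^{+};H_2)$ with the norm frozen at a given time).

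For the first stage, set $M=\Vert\phi\Vert_{L^\infty([0,T];X)}$ and fix an arbitrary $t_0\in[0,T]$. Since $\{t:\Vert\phi(t)\Vert_X\le M\}$ has full measure, I can choose a sequence $s_n\to t_0$ with $\Vert\phi(s_n)\Vert_X\le M$; by reflexivity of $X$ a subsequence converges weakly in $X$ to some $v\in X$ with $\Vert v\Vert_X\le M$ by weak lower semicontinuity of the norm. The continuous injection $X\hookrightarrow Y$ turns this into weak convergence in $Y$, while the assumed weak continuity of $\phi$ in $Y$ gives $\phi(s_n)\rightharpoonup\phi(t_0)$ in $Y$; uniqueness of weak limits in $Y$ then forces $\phi(t_0)=v\in X$ and $\Vert\phi(t_0)\Vert_X\le M$. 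Hence $\phi(t)\in X$ for every $t\in[0,T]$, with this uniform bound.

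For the second stage, fix $t_0$ and an arbitrary sequence $t_n\to t_0$. By the first stage $\{\phi(t_n)\}$ is bounded in $X$, so any subsequence admits a further subsequence converging weakly in $X$; the identification argument above (push the weak convergence into $Y$ and compare with the $Y$-weak limit $\phi(t_0)$) shows the limit is always $\phi(t_0)$. Since every subsequence of $\{\phi(t_n)\}$ has a sub-subsequence with the same weak limit $\phi(t_0)$ in $X$, the whole sequence satisfies $\phi(t_n)\rightharpoonup\phi(t_0)$ in $X$, which is the asserted weak continuity.

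The one genuinely delicate point is the first stage: $L^\infty([0,T];X)$ controls $\phi$ only almost everywhere, so one must manufacture an everywhere-defined $X$-valued representative, and this is precisely where the weak sequential compactness (reflexivity) of $X$ is used; the remaining bookkeeping with subsequences is routine.
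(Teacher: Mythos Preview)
The paper does not supply its own proof of this lemma: it merely quotes the statement from Temam \cite{t} and uses it as a black box. So there is no ``paper's proof'' to compare against.

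Your argument is correct and is essentially the standard proof one finds in \cite{t} (Chapter~III, Lemma~1.4). You are right to flag the reflexivity of $X$ as the hidden hypothesis: the lemma as stated in the paper omits it, but it is needed for the weak sequential compactness of bounded sets, and it is indeed satisfied in every application here since $X$ is a Hilbert space. The two-stage structure (first upgrade the a.e.\ bound to an everywhere bound using an approximating sequence from the full-measure set, then run the sub-subsequence identification) is exactly the classical route. One small cosmetic point: in the second stage you could streamline by noting that once every $\phi(t)$ lies in the closed ball $B_X(0,M)$, which is weakly compact and metrizable in the weak topology when $X$ is reflexive and separable, weak continuity follows from sequential weak continuity directly; but your subsequence formulation is equivalent and perfectly rigorous.
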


\begin{Lem}(\cite{zxz}) \label{lem4}
	Let $\{U(t,\tau)\}_{t\geq\tau}$ be a process on Banach spaces $\{X_t\}_{t\in \mathbb{R}}$, then $\{U(t,\tau)\}_{t\geq\tau}$ has a time-dependent global attractor in $\{X_t\}_{t\in \mathbb{R}}$ if the following conditions hold:\\
	(i) $\{U(t,\tau)\}_{t\geq\tau}$ has a pullback absorbing set $\mathscr{B}=\{B_t\}_{t\in \mathbb{R}}$ in $\{X_t\}_{t\in \mathbb{R}}$;\\
	(ii) $\{U(t,\tau)\}_{t\geq\tau}$ is pullback asymptotically compact.
\end{Lem}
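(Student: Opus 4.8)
The plan is to construct the attractor by hand as the family of pullback $\omega$-limit sets of the absorbing family $\mathscr{B}=\{B_t\}_{t\in\mathbb{R}}$ provided by hypothesis (i), and then to verify that this family meets every requirement in the definition of a time-dependent global attractor. Concretely, for each $t\in\mathbb{R}$ I would set
$$K_t=\bigcap_{\sigma\leq t}\ \overline{\bigcup_{\tau\leq\sigma}U(t,\tau)B_\tau}^{\,X_t},$$
so that, unwinding the definition, $z\in K_t$ if and only if there exist sequences $\tau_n\to-\infty$ and $z_n\in B_{\tau_n}$ with $U(t,\tau_n)z_n\to z$ in $X_t$. Since $\mathscr{B}$ is uniformly bounded and $\tau_n\to-\infty$, hypothesis (ii) already guarantees that $\{U(t,\tau_n)z_n\}$ has a convergent subsequence, so each $K_t$ is nonempty. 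It then remains to establish three things: (a) $K_t$ is compact in $X_t$; (b) $\mathscr{K}=\{K_t\}_{t\in\mathbb{R}}$ is pullback attracting; (c) $\mathscr{K}$ is the smallest closed family that is pullback attracting. Granting these, $\mathscr{A}:=\mathscr{K}$ is the desired time-dependent global attractor.

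For (a), $K_t$ is closed by construction, being an intersection of closed sets, so only precompactness needs work: given $\{y_m\}\subset K_t$, I would use the characterization above to pick, for each $m$, a time $\sigma_m\leq-m$ and a point $w_m\in B_{\sigma_m}$ with $\|U(t,\sigma_m)w_m-y_m\|_{X_t}\leq 1/m$; since $\{w_m\}$ is bounded and $\sigma_m\to-\infty$, hypothesis (ii) yields a subsequence along which $U(t,\sigma_m)w_m$ converges, hence $y_m$ converges along it, with limit in $K_t$. For (b), I would argue by contradiction: suppose there were a uniformly bounded family $\mathscr{C}=\{C_t\}$, a time $t$, a number $\varepsilon_0>0$, and sequences $\tau_n\to-\infty$, $z_n\in C_{\tau_n}$, with $dist_{X_t}(U(t,\tau_n)z_n,K_t)\geq\varepsilon_0$. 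Fixing $R$ with $C_s\subset\mathbb{B}_s(R)$ for all $s$, hypothesis (ii) gives, along a subsequence, $U(t,\tau_n)z_n\to z$ in $X_t$. For an arbitrary fixed $\sigma\leq t$ and $n$ large (so that $\sigma-\tau_n$ exceeds the absorbing time $t_0(R)$), the point $w_n:=U(\sigma,\tau_n)z_n$ lies in $B_\sigma$ by (i), while $U(t,\sigma)w_n=U(t,\tau_n)z_n\to z$ by the process identity; hence $z\in\overline{U(t,\sigma)B_\sigma}$. Letting $\sigma$ range over all $\sigma\leq t$ shows $z\in K_t$, contradicting $dist_{X_t}(U(t,\tau_n)z_n,K_t)\geq\varepsilon_0$. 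Finally, (c) is immediate: given any closed pullback attracting family $\mathscr{K}'=\{K'_t\}$, applying its attraction to the uniformly bounded family $\mathscr{B}$ gives $\delta_t(U(t,\tau)B_\tau,K'_t)\to0$ as $\tau\to-\infty$, so any $z\in K_t$, being a limit of points $U(t,\tau_n)z_n$ with $z_n\in B_{\tau_n}$, satisfies $dist_{X_t}(z,K'_t)=0$ and hence $z\in K'_t$.

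The step I expect to be the main obstacle is (b), and within it the "re-absorption at an intermediate time $\sigma$": it hinges on the absorbing time $t_0=t_0(R)$ depending only on the radius $R$ and not on the starting instant, combined with the evolution property $U(t,\tau)=U(t,\sigma)U(\sigma,\tau)$ — this is precisely what lets one push $\sigma$ down to $-\infty$ while keeping the absorbed iterates inside the fixed fiber $B_\sigma$. Everything else reduces to diagonal extraction and the definition of $K_t$. If one additionally wants the invariance $U(t,\tau)K_\tau=K_t$, it would follow from the construction together with (strong--weak) continuity of the maps $U(t,\tau)$, but this is not required by the statement of Lemma \ref{lem4}.
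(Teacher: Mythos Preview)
The paper does not supply its own proof of this lemma: it is quoted verbatim from \cite{zxz} and used as a black box. Your argument is the standard $\omega$-limit construction and is essentially correct; this is also the route taken in \cite{zxz} (and, with minor variations, in \cite{cpt}). A couple of small points you might want to tighten: first, Definition~2.4 requires a pullback attracting family to be uniformly bounded, so you should record that $K_t\subset\overline{B_t}$ for all $t$ (this follows because $B_\tau\subset\mathbb{B}_\tau(R_0)$ and hence $U(t,\tau)B_\tau\subset B_t$ once $t-\tau\geq t_0(R_0)$, so the tail of the intersection defining $K_t$ already sits inside $\overline{B_t}$); second, in your minimality step (c) you assume the competing family $\mathscr{K}'$ has closed sections, whereas Definition~2.5 asks for compact sections --- since compact implies closed this is fine, but it is worth saying explicitly. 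Otherwise your sketch matches the literature proof.
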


\begin{Lem}(\cite{zxz}) \label{lem5}
	Let $\{U(t,\tau)\}_{t\geq\tau}$ be a process on Banach spaces $\{X_t\}_{t\in \mathbb{R}}$ and have a pullback absorbing set $\mathscr{B}=\{B_t\}_{t\in \mathbb{R}}$. Moreover, assume that, for any $\epsilon>0$, there exist $\tau_0=\tau_0(\epsilon)<t$ and $\psi^t_{\tau_0}(\cdot,\cdot)\in \hat{C}(B_{\tau_0})$ such that $$\Vert U(t,\tau_0)x-U(t,\tau_0)y\Vert_{X_t}\leq \epsilon+\psi^t_{\tau_0}(x,y),\ \forall x,y\in B_{\tau_0},\ for\ any\ t\in \mathbb{R}.$$ Then $\{U(t,\tau)\}_{t\geq\tau}$ is pullback asymptotically compact.
\end{Lem}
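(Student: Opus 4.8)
The plan is to deduce precompactness of $\{U(t,\tau_n)x_n\}$ from a Cauchy-subsequence extraction, using the pullback absorbing set to place the evolved data into $B_{\tau_0}$ and then invoking the defining property of the contractive function to control mutual distances. Since each $X_t$ is a Banach space, a subset is precompact precisely when every sequence in it admits a Cauchy (hence convergent) subsequence; so, fixing $t\in\mathbb{R}$ and a bounded sequence $\{x_n\}$ with $x_n\in\mathbb{B}_{\tau_n}(R)$ and $\tau_n\to-\infty$, it suffices to extract a Cauchy subsequence of $\{U(t,\tau_n)x_n\}$ in $X_t$.

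To this end, fix $\epsilon>0$ and let $\tau_0=\tau_0(\epsilon)<t$ and $\psi^t_{\tau_0}(\cdot,\cdot)\in\hat{C}(B_{\tau_0})$ be as in the hypothesis. Since $\mathscr{B}=\{B_t\}_{t\in\mathbb{R}}$ is a pullback absorbing set, there is $t_0=t_0(R)$ with $U(\tau_0,\tau_n)x_n\in B_{\tau_0}$ whenever $\tau_0-\tau_n\ge t_0$; as $\tau_n\to-\infty$ this holds for all $n\ge N=N(\epsilon)$. Put $y_n:=U(\tau_0,\tau_n)x_n\in B_{\tau_0}$ for $n\ge N$, so that the process property gives $U(t,\tau_n)x_n=U(t,\tau_0)U(\tau_0,\tau_n)x_n=U(t,\tau_0)y_n$. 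By the definition of $\hat{C}(B_{\tau_0})$, from $\{y_n\}_{n\ge N}$ one extracts a subsequence $\{y_{n_k}\}$ with $\lim_{k\to\infty}\lim_{l\to\infty}\psi^t_{\tau_0}(y_{n_k},y_{n_l})=0$; combining this with the hypothesized inequality applied to $y_{n_k},y_{n_l}\in B_{\tau_0}$ yields
$$\limsup_{k\to\infty}\ \limsup_{l\to\infty}\ \big\|U(t,\tau_{n_k})x_{n_k}-U(t,\tau_{n_l})x_{n_l}\big\|_{X_t}\ \le\ \epsilon .$$

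Finally I would iterate this construction along $\epsilon=\tfrac1m$, $m\in\mathbb{N}$: apply the previous step to $\{U(t,\tau_n)x_n\}$ with $\epsilon=1$, then to the resulting subsequence with $\epsilon=\tfrac12$, and so on, obtaining nested subsequences along each of which the iterated $\limsup$ of mutual $X_t$-distances is at most $\tfrac1m$; the diagonal subsequence is then Cauchy in $X_t$ and hence convergent, which is exactly the pullback asymptotic compactness required. The only point calling for care — and the main, though modest, obstacle — is this diagonalization: one must check that each refinement is still of the form $U(t,\cdot)$ applied to bounded data so that the absorbing-set step and the contractive function may be reapplied within it, and that passing from the double-$\limsup$ bounds on successive levels to a genuine Cauchy diagonal sequence is carried out with strictly increasing indices; everything else is bookkeeping with the cocycle identity and the definition of a contractive function.
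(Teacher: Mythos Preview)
The paper does not supply its own proof of this lemma: it is quoted verbatim from \cite{zxz} and used as a black box, so there is nothing in the paper to compare against. Your proposal is the standard argument behind such ``contractive function'' criteria and is essentially correct; the absorbing step, the cocycle identity, and the use of Definition~2.8 are exactly what is needed.

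One remark on your final step: the diagonalisation from the iterated double-$\limsup$ bounds to a genuine Cauchy subsequence, while feasible, is where imprecision tends to creep in, and you are right to flag it. A cleaner route that avoids this bookkeeping entirely is to argue by contradiction: if $\{U(t,\tau_n)x_n\}$ were not precompact in $X_t$, it would contain a $\delta$-separated subsequence for some $\delta>0$; running your absorbing/contractive argument on that subsequence with $\epsilon<\delta$ produces a further subsequence along which the iterated $\limsup$ of mutual distances is at most $\epsilon<\delta$, contradicting $\delta$-separation. This is how the result is typically proved in the literature your paper cites, and it sidesteps the index-management you identified as the main obstacle.
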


\section{Existence and uniqueness of strong solutions}
We begin by showing the definition of a strong solution to problem (\ref{eq2.1})-(\ref{eq2.2}).
\begin{Def}
	For any $T>\tau$, let $I=[\tau,T]$, a binary form $z=(u,\eta^t)\in \mathcal{M}_t^{1}$ is said to be a strong solution to problem (\ref{eq2.1})-(\ref{eq2.2}) in the time interval $I$, with the initial value $z_\tau=(u_\tau,\eta^\tau)\in \mathcal{M}_\tau^{1}$, if $z$ satisfies
	$$u\in C(I;\mathcal{H}_t^{1}),\ \eta^t\in C(I;L^{2}_{\mu}(\mathbb{R}^{+};H_2)),$$
	$$\eta^t_t+\eta^t_s\in L^\infty(I;L^{2}_{\mu}(\mathbb{R}^{+};H_1))\cap L^2(I;L^{2}_{\mu}(\mathbb{R}^{+};H_2)),$$
	and for all $\upsilon\in H_1,\ \phi\in L^{2}_{\mu}(\mathbb{R}^{+};H_1)$,
	\begin{equation*}
		\left\{
		\begin{aligned}
			 & \langle u_t,\upsilon\rangle+\varepsilon(t)\langle\nabla u_t,\nabla \upsilon\rangle+\langle\nabla u,\nabla \upsilon\rangle+\int_0^{\infty} \mu(s)\langle\nabla{\eta^{t}(s),\nabla \upsilon\rangle}ds+\langle f(u),\upsilon\rangle=\langle g,\upsilon\rangle, \\
			 & \langle \eta^t_t+\eta^t_s,\varphi\rangle_{\mu,1}=\langle u,\varphi\rangle_{\mu,1},
		\end{aligned}
	\right.
	\end{equation*}
hold $a.e.\ t\in I.$

We may now state the first important theory in this paper.
\end{Def}

\begin{Thm}\label{thm1}
Suppose that assumptions $(\mathbf{H}_1)$-$(\mathbf{H}_3)$ hold. For any initial data $z_\tau=(u_\tau,\eta^\tau)\in \mathcal{M}_\tau^1$, there exists a unique strong solution $z=(u,\eta^t)$ to problem \eqref{eq2.1}-\eqref{eq2.2} in $\mathcal{M}_t^1$.
Moreover, $z(t)$ is continuous with respect to the initial data. That is, let $z_i(t)(i=1,2)$ be two strong solutions to problem \eqref{eq2.1}-\eqref{eq2.2} with the initial datas $z_i(\tau)\in \mathcal{M}_\tau^1$, then there exists a positive constant $C$ such that
\begin{equation*}
	\Vert z_1(t)-z_2(t)\Vert^2_{\mathcal{M}_t^1}\leq e^{C(t-\tau)}\Vert z_1(\tau)-z_2(\tau)\Vert^2_{\mathcal{M}_\tau^1},\ t\geq \tau.
\end{equation*}
\end{Thm}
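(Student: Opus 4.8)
\emph{Proof strategy.} The plan is to use the Faedo--Galerkin method, closing the a priori estimates directly at the level of $\mathcal{M}_t^1$ and using the lower--order estimate in $\mathcal{M}_t^0$ only to absorb the nonlinear term. Let $\{w_j\}_{j\ge 1}$ be the $L^2(\Omega)$--orthonormal basis of eigenfunctions of $A=-\Delta$ (also orthogonal in $H_1$ and $H_2$); the history variable $\eta^{t,n}$ is either reconstructed from $u^n$ via the defining formula $\eta^t(s)=\int_0^s u(t-r)\,dr$, suitably extended for $s>t-\tau$ through $\eta_\tau$, or discretized along a basis of $L^2_\mu(\mathbb{R}^{+};H_2)$. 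Projecting \eqref{eq2.1}--\eqref{eq2.2} onto the first $n$ modes produces an ODE system, and the a priori bounds below make its solution $z^n=(u^n,\eta^{t,n})$ global.

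For the estimates one tests the first equation of \eqref{eq2.1} with $u^n_t$ and with $u^n$, and the second equation with $\eta^{t,n}$ in $L^2_\mu(\mathbb{R}^{+};H_1)$. Here the two memory cross--terms $\langle\eta^{t,n},u^n\rangle_{\mu,1}$ cancel, $\varepsilon'(t)\le 0$ removes the unfavourable contribution of $\tfrac{d}{dt}\bigl(\varepsilon(t)\|u^n\|_1^2\bigr)$, Lemma~\ref{lem1} yields the dissipative term in $\|\eta^{t,n}\|_{\mu,1}^2$, and \eqref{c3}, \eqref{c5} control $\langle f(u^n),u^n\rangle$ and $\int_\Omega F(u^n)\,dx$; together with Young's and Poincar\'{e}'s inequalities this gives, on every $I=[\tau,T]$, a uniform bound for $\|z^n(t)\|_{\mathcal{M}_t^0}^2+\int_\Omega F(u^n)\,dx$ and for $\int_\tau^T\bigl(\|u^n_t\|^2+\varepsilon(t)\|\nabla u^n_t\|^2\bigr)\,dt$, so in particular $u^n$ is bounded in $L^\infty(I;L^p(\Omega))$ and $f(u^n)$ in $L^\infty(I;L^{p'}(\Omega))$, $p'=p/(p-1)$. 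Next one tests the first equation with $Au^n$ and the second with $\eta^{t,n}$ in $L^2_\mu(\mathbb{R}^{+};H_2)$: the memory cross--terms $\langle\eta^{t,n},u^n\rangle_{\mu,2}$ again cancel, $\varepsilon'(t)\le 0$ again kills the bad term, Lemma~\ref{lem1} produces the term in $\|\eta^{t,n}\|_{\mu,2}^2$, and the nonlinearity is handled by integration by parts, $\langle f(u^n),Au^n\rangle=\langle f'(u^n)\nabla u^n,\nabla u^n\rangle\ge -l\|\nabla u^n\|^2$ by \eqref{c4}, with $l\|\nabla u^n\|^2\le\tfrac14\|u^n\|_2^2+l^2\|u^n\|^2$ absorbed using the $\mathcal{M}_t^0$ bound. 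Gronwall's lemma then gives $\sup_{t\in I}\|z^n(t)\|_{\mathcal{M}_t^1}^2\le C(I,\|z_\tau\|_{\mathcal{M}_\tau^1})$ and $\int_\tau^T\bigl(\|u^n\|_2^2+\|\eta^{t,n}\|_{\mu,2}^2\bigr)\,dt\le C$; finally, solving the equations for $u^n_t$ and for $\eta^{t,n}_t+\eta^{t,n}_s$ bounds these in the spaces appearing in the definition of a strong solution.

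With these bounds one passes to the limit along a subsequence in the energy spaces (weak--$*$), uses Lemma~\ref{lem2} to get $u^n\to u$ in $C(I;H_{2-\epsilon})$ for small $\epsilon>0$ and hence a.e., which together with the growth of $f$ and the $L^\infty(I;L^p)$ bound yields $f(u^n)\rightharpoonup f(u)$; passing to the limit in the weak formulation identifies $z=(u,\eta^t)$ as a strong solution, the boundary and initial conditions and the second equation being preserved in the limit. For the time continuity, Lemma~\ref{lem3} upgrades the weak continuity of $u$ into $H_1$ to weak continuity into $\mathcal{H}_t^1=H_2$, and similarly for $\eta^t$ into $L^2_\mu(\mathbb{R}^{+};H_2)$; one then checks that $t\mapsto\|z(t)\|_{\mathcal{M}_t^1}^2$ is absolutely continuous from the differential identity satisfied by the limit (using $\varepsilon(t)\ge\varepsilon(T)>0$ on the compact interval to handle the time--dependent weight), and combines continuity of the norm with weak continuity in the Hilbert space $\mathcal{M}_t^1$ to conclude $z\in C(I;\mathcal{M}_t^1)$.

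For uniqueness and continuous dependence, put $w=u_1-u_2$, $\xi=\eta_1-\eta_2$, subtract the two copies of \eqref{eq2.1}, and test the first equation with $Aw$ and the second with $\xi$ in $L^2_\mu(\mathbb{R}^{+};H_2)$; the memory cross--terms cancel, $\varepsilon'(t)\le 0$ and Lemma~\ref{lem1} give the usual good signs, and the only delicate point is $\langle f(u_1)-f(u_2),Aw\rangle$: bounding it by $\|f(u_1)-f(u_2)\|\,\|Aw\|$ and using the polynomial growth of $f'$ together with the Sobolev embeddings of $H_2$ (so that $\|f(u_1)-f(u_2)\|\le C_R\|w\|$ whenever $\|u_1\|_2,\|u_2\|_2\le R$), it is absorbed into $\tfrac14\|w\|_2^2$ plus a multiple of $\|w\|_1^2\le\|z_1-z_2\|_{\mathcal{M}_t^1}^2$. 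This leaves $\tfrac{d}{dt}\|z_1(t)-z_2(t)\|_{\mathcal{M}_t^1}^2\le C\|z_1(t)-z_2(t)\|_{\mathcal{M}_t^1}^2$, and Gronwall's inequality yields the stated estimate, uniqueness following by taking $z_1(\tau)=z_2(\tau)$. The main obstacle is twofold: closing the $\mathcal{M}_t^1$ a priori estimate in the presence of the term $-\varepsilon(t)\Delta u_t$, which precludes any gain of spatial regularity and forces the nonlinearity to be controlled only through \eqref{c4} and the $\mathcal{M}_t^0$ bound, and the treatment of $\langle f(u_1)-f(u_2),Aw\rangle$ for $f$ of arbitrary polynomial growth in the continuous--dependence step, together with the upgrade from weak to strong time continuity in the $t$--dependent space.
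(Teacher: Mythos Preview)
Your existence argument is essentially the paper's: Faedo--Galerkin, first the $\mathcal{M}_t^0$-level bounds (which the paper quotes from \cite{tzl} and you sketch by testing with $u$ and $u_t$), then the $\mathcal{M}_t^1$ estimate by testing with $Au$ and using $\langle f(u),Au\rangle=\int_\Omega f'(u)|\nabla u|^2\,dx\ge -l\|\nabla u\|^2$, Gronwall, and passage to the limit via Lemmas~\ref{lem2}--\ref{lem3}. Your treatment of strong time continuity (weak continuity plus continuity of the norm) is in fact more careful than the paper, which just invokes Lemma~\ref{lem3} for $u$ and cites \cite{zwg} for $\eta^t$.

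The only substantive divergence is in the continuous-dependence step, and here your route has a gap. You bound $\langle f(u_1)-f(u_2),A w\rangle$ by $\|f(u_1)-f(u_2)\|\,\|Aw\|$ and claim $\|f(u_1)-f(u_2)\|\le C_R\|w\|$ from polynomial growth of $f'$ and Sobolev embedding of $H_2$. That inequality requires $u_1,u_2\in L^\infty$, i.e.\ $H_2\hookrightarrow L^\infty$, which holds only for $n\le 3$; since the paper works in $\mathbb{R}^n$ with arbitrary $p\ge 2$, your estimate fails in general. The paper instead reuses the integration-by-parts device from the a~priori estimate, writing (formally)
\[
-2\langle f(u_1)-f(u_2),-\Delta\bar u\rangle=-2\langle f'(\theta u_1+(1-\theta)u_2)\bar u,-\Delta\bar u\rangle\le 2l\|\nabla\bar u\|^2,
\]
relying only on \eqref{c4}, with no growth condition on $f'$ and no embedding. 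This gives $\tfrac{d}{dt}\|\bar z\|_{\mathcal{M}_t^1}^2\le 2l\|\bar z\|_{\mathcal{M}_t^1}^2$ directly, dimension- and $p$-independently. If you want a fully rigorous alternative, prove uniqueness first in $\mathcal{M}_t^0$ (where $\langle f(u_1)-f(u_2),\bar u\rangle\ge -l\|\bar u\|^2$ is immediate from \eqref{c4}) and then combine with the $\mathcal{M}_t^1$ a~priori bound.
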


\begin{proof}
\subsection*{Proof of existence of strong solution needs the following three steps.}
$\mathbf{Step1:}$ First we choose the base functions $\{w_j\}_{j\geq1}$ in $H_2$ with $w_j$ being the eigenfunctions of the Laplacian operator subject to the Dirichlet boundary condition
	\begin{equation*}
	\left\{
	\begin{aligned}
		&-\Delta w_j=\lambda_jw_j,\\
		&w_j|_{\partial\Omega}=0,
	\end{aligned}
	\right.
\end{equation*}
where eigenvalues $\lambda_j$ satisfy $0<\lambda_1\leq\lambda_2\leq\cdots\cdots\leq\lambda_j\leq\cdots\cdots,\ \lambda_j\to\infty\ as\ j\to \infty.$\\
We also normalize $w_j$, then $\{w_j\}_{j\geq1}$ become an orthogonal basis in $\mathcal{H}_t^1$. At the same time, we choose an orthogonal basis $\{\zeta_j\}_{j\geq1}$ of $L^{2}_{\mu}(\mathbb{R}^{+};H_2)$.

Now we use the Faedo-Galerkin method to find the approximate solution. Let $m$ be a given positive integer and $$u_m=\sum_{i=1}^m a_{im}(t)w_i,\ \eta_m^t=\sum_{i=1}^m b_{im}(t)\zeta_i,$$ which satisfy the following indentities
\begin{equation}\label{eq3}
	\left\{
	\begin{aligned}
		&u_{mt} -\varepsilon(t)\Delta{u_{mt}}-\Delta{u_m}-\int_0^{\infty} \mu(s)\Delta{\eta^{t}_m(s)}ds+f(u_m)=g_m(x),\\
		&\eta^{t}_{mt}=-\eta^{t}_{ms}+u_m,\\
		&u_m(x,t)|_{\partial\Omega}=0,\ \eta^{t}_m(x,s)|_{\partial\Omega\times \mathbb{R}^{+}}=0,\ t\geq \tau,\\
		&u_m(x,\tau)=P_mu_\tau,\ \eta^{\tau}_m(x,s)=Q_m\eta_\tau,
	\end{aligned}
	\right.
\end{equation}
where $P_m,\ Q_m$ are projections on subspaces $Span\{w_1,\cdots,w_m\}\subset\mathcal{H}_t^1,\ Span\{\zeta_1,\cdots,\zeta_m\}\subset L^{2}_{\mu}(\mathbb{R}^{+};H_2)$ respectively and $g_m(x)=P_mg(x)$.

The fundamental existence theory of Ordinary Differential Equations states that there exists a unique solution $z_m=(u_m,\eta_m^t)$ to Eq.\eqref{eq3} on $[\tau,T]$.

$\mathbf{Step2:}$ We now try to get the priori estimates for the approximate solution $z_m$ obtained in the previous step. In order to proceed with the proof, we introduce the following lemma.
\begin{Lem}(\cite{tzl}) 
	Suppose that $(\mathbf{H}_1)-(\mathbf{H}_3)$ hold, then for any $t\in I$, we have the following estimates
	\begin{equation}\label{c6}
		\Vert u_m\Vert^2+\varepsilon(t)\Vert \nabla u_m\Vert^2+\Vert \eta^t_m\Vert^2_{\mu,1}+\int_\tau^{T}\left( \Vert \nabla u_m(r)\Vert^2+\Vert \eta^r_m\Vert^2_{\mu,1}+\Vert u_m(r)\Vert^p_p\right) dr\leq \mathcal{K}_1,
	\end{equation}
	\begin{equation}\label{c7}
		\Vert \nabla u_m\Vert^2+\Vert u_m\Vert^p_p\leq \mathcal{K}_2,
    \end{equation}
    \begin{equation}\label{c8}
    	\int_\tau^{T}\left( \Vert u_{mt}(r)\Vert^2+\varepsilon(t)\Vert \nabla u_{mt}(r)\Vert^2\right) dr\leq \mathcal{K}_3,
    \end{equation}
where $\mathcal{K}_1,\mathcal{K}_2,\mathcal{K}_3$ are positive constants only depending on $T$.
\end{Lem}

In fact, multiplying both sides of equation in (\ref{eq3}) by $-\Delta u_m$ and integrating it over $\Omega$, we get
\begin{align}\label{c9}
	\frac{d}{dt}&\left(\Vert \nabla u_m\Vert^2+\varepsilon(t)\Vert\Delta u_m\Vert^2+\Vert\eta^t_m\Vert^2_{\mu,2}\right)+(2-\varepsilon^{\prime}(t))\Vert \Delta u_m\Vert^2+2\langle\eta^t_m,\eta^t_{ms}\rangle_{\mu,2}\notag\\
	&=-2\langle f(u_m),-\Delta u_m\rangle+2\langle g_m,-\Delta u_m\rangle.
\end{align}
Using (\ref{c4}) and (\ref{c7}), we are able to determine that
\begin{align}\label{c10}
	-2\langle f(u_m),-\Delta u_m\rangle&=-2\langle f^\prime(u_m)\nabla u_m,\nabla u_m\rangle\notag\\
	&\leq 2l\Vert \nabla u_m\Vert^2\notag\\
	&\leq 2l\mathcal{K}_2.
\end{align}
By use of Young's inequality, we have
\begin{equation}\label{c11}
	2\langle g_m,-\Delta u_m\rangle\leq \Vert g_m\Vert^2+\Vert \Delta u_m\Vert^2.
\end{equation}
Substituting (\ref{c10}), (\ref{c11}) into (\ref{c9}) and by Lemma \ref{lem1}, we arrive at
\begin{align}\label{c12}
	\frac{d}{dt}&\left(\Vert \nabla u_m\Vert^2+\varepsilon(t)\Vert\Delta u_m\Vert^2+\Vert\eta^t_m\Vert^2_{\mu,2}\right)+(1-\varepsilon^{\prime}(t))\Vert \Delta u_m\Vert^2+\rho\Vert\eta_m^t\Vert^2_{\mu,2}\notag\\
	&\leq 2l\mathcal{K}_2+\Vert g_m\Vert^2.
\end{align}
Since $H_2\hookrightarrow H_1$, we can write $\widetilde{\lambda}\Vert \nabla u_m\Vert^2\leq\Vert \Delta u_m\Vert^2$, then (\ref{c12}) can be written as
\begin{align}\label{c13}
	\frac{d}{dt}&\left(\Vert \nabla u_m\Vert^2+\varepsilon(t)\Vert\Delta u_m\Vert^2+\Vert\eta^t_m\Vert^2_{\mu,2}\right)+\frac{\widetilde{\lambda}}{2}\Vert \nabla u_m\Vert^2+\frac{1}{2}\Vert \Delta u_m\Vert^2+\rho\Vert\eta_m^t\Vert^2_{\mu,2}\notag\\
	&\leq 2l\mathcal{K}_2+\Vert g_m\Vert^2.
\end{align}
Set $\alpha=\min\left\{\frac{\widetilde{\lambda}}{2},\frac{1}{2L},\rho\right\}$, we then deduce from (\ref{c13}) that
\begin{equation}
	\frac{d}{dt}\Vert z_m(t)\Vert^2_{\mathcal{M}^1_t}+\alpha\Vert z_m(t)\Vert^2_{\mathcal{M}^1_t}\leq 2l\mathcal{K}_2+\Vert g_m\Vert^2.
\end{equation}
Applying the $Gronwall$ lemma on $[\tau,t]$, we obtain
\begin{equation}\label{24}
	\Vert z_m(t)\Vert^2_{\mathcal{M}_t^1}\leq e^{-\alpha(t-\tau)}\Vert z_{m}(\tau)\Vert^2_{\mathcal{M}_\tau^1}+\mathfrak{C}_0.
\end{equation}
where $\mathfrak{C}_0=\frac{1}{\alpha}(1-e^{-\alpha t})(2l\mathcal{K}_2+\Vert g_m\Vert^2)$.\\
Therefore, $\{z_m\}_{m\geq1}$ is uniformly bounded in $L^\infty(I;\mathcal{M}_t^1)$, that is, $u_m$, $\eta_m^t$ is uniformly bounded in $L^\infty(I;\mathcal{H}_t^1)$, $L^\infty(I;L^{2}_{\mu}(\mathbb{R}^{+};H_2))$ respectively.

In addition, from assumption $(\mathbf{H}_1)$ and (\ref{c12}), we have
\begin{align}\label{c15}
	\frac{d}{dt}&\left(\Vert \nabla u_m\Vert^2+\varepsilon(t)\Vert\Delta u_m\Vert^2+\Vert\eta^t_m\Vert^2_{\mu,2}\right)+\Vert \Delta u_m\Vert^2+\rho\Vert\eta_m^t\Vert^2_{\mu,2}\notag\\
	&\leq 2l\mathcal{K}_2+\Vert g_m\Vert^2.
\end{align}
Integrating from $\tau$ to $T$ on both sides of (\ref{c15}), we get
\begin{align}\label{c14}
	\int_\tau^{T}\left( \Vert \Delta u_m(r)\Vert^2+\rho\Vert\eta_m^r\Vert^2_{\mu,2}\right) dr&\leq\Vert z_{m}(\tau)\Vert^2_{\mathcal{M}_\tau^1}+(2l\mathcal{K}_2+\Vert g_m\Vert^2)(T-\tau)\notag\\
	&\leq\Vert z_{m}(\tau)\Vert^2_{\mathcal{M}_\tau^1}+\mathfrak{C}_1,
\end{align}
where $\mathfrak{C}_1=(2l\mathcal{K}_2+\Vert g_m\Vert^2)(T-\tau)$.\\
Therefore, $\Delta u_m$, $\eta_m^t$ is uniformly bounded in $L^2(I;L^2(\Omega))$, $L^2(I;L^{2}_{\mu}(\mathbb{R}^{+};H_2))$ respectively.

We choose $q=\frac{p}{p-1}$, then in value of (\ref{c3}), we have
\begin{align}\label{c28}
	\int_\tau^{T}\int_\Omega \vert f(u_m(r))\vert^q dx dr&\leq \int_\tau^{T}\int_\Omega \left( \beta_2\vert u_m\vert^{p-1}+\gamma_2\right)^q  dx dr\notag\\
	&\leq C_{q,\beta_2}\int_\tau^{T}\Vert u_m(r)\Vert^p_p dr+C_{q,\gamma_2}\vert\Omega\vert(T-\tau).
\end{align}

Hence, $f(u_m)$ is uniformly bounded in $L^q(I;L^q(\Omega))$.

$\mathbf{Step3:}$ According to the findings in step2, we are aware that there is a subsequence of $z_m=(u_m,\eta_m^t)$, still denoted by $z_m$ suach that
\begin{align*}
	&u_m\rightharpoonup u\ weakly\ star\ in\ L^\infty(I;\mathcal{H}_t^1),\notag\\
	&u_m\rightharpoonup u\ weakly\ in\ L^2(I;H_2),\notag\\
	&u_{mt}\rightharpoonup u_t\ weakly\ in\ L^2(I;\mathcal{H}_t),\notag\\
	&\eta_m^t\rightharpoonup \eta^t\ weakly\ star\ in\ L^\infty(I;L^{2}_{\mu}(\mathbb{R}^{+};H_2)),\notag\\
	&\eta_m^t\rightharpoonup \eta^t\ weakly \ in\ L^2(I;L^{2}_{\mu}(\mathbb{R}^{+};H_2)),\notag\\
	&f(u_m)\rightharpoonup \mathcal{X}\ weakly\ in\ L^q(I;L^q(\Omega)).
\end{align*}
Taking limit for Eq.(\ref{eq3}), we find that $z=(u,\eta^t)$ is a solution to problem (\ref{eq2.1})-(\ref{eq2.2}) satisfying $$z\in L^\infty(I;\mathcal{M}_t^1).$$
By Lemma \ref{lem2} and the compact embedding $\mathcal{H}_t^{\sigma}\hookrightarrow \mathcal{H}_t$, we are able to acquire $u_m\longrightarrow u\in C(I;\mathcal{H}_t^\gamma)(0<\gamma<1)$.
Then Lemma \ref{lem3} allows us to reach the following conclusion $$u\in C(I;\mathcal{H}_t^1)\ for\ all\ t>\tau.$$
Indeed, it follows from the continuity of $f$ that $f(u_m)\longrightarrow f(u)$, a.e. in $\Omega\times I$, then we know $\mathcal{X}=f(u)$.

Finally, the continuity of $\eta^t$ can be proved by the same method as in \cite{zwg}, that is, $\eta^t\in C(I;L^{2}_{\mu}(\mathbb{R}^{+};H_2))$.

So far, we complete the proof of the existence of strong solution.

\subsection*{Proof of uniqueness of strong solution is the following.}
Indeed, let $z_i(\tau)\in \mathcal{M}_\tau^1(i=1,2)$ be two initial values and $z_i(t)$ be the corresponding solution to problem \eqref{eq2.1}-\eqref{eq2.2}. We define $\bar{z}(t)=z_1(t)-z_2(t)=(u_1-u_2,\eta^t_1-\eta^t_2)=(\bar{u},\bar{\eta}^t)$. Obviously, the following equation can be obtained
\begin{equation}\label{eq4}
	\left\{
	\begin{aligned}
		&\bar{u}_{t} -\varepsilon(t)\Delta{\bar{u}_{t}}-\Delta{\bar{u}}-\int_0^{\infty} \mu(s)\Delta{\bar{\eta}^{t}(s)}ds+f(u_1)-f(u_2)=0,\\
		&\bar{\eta}^{t}_{t}=-\bar{\eta}^{t}_{s}+\bar{u},\\
		&\bar{u}(x,t)|_{\partial\Omega}=0,\ \bar{\eta}^{t}(x,s)|_{\partial\Omega\times \mathbb{R}^{+}}=0,\ t\geq \tau,\\
		&\bar{u}(x,\tau)=u_1(\tau)-u_2(\tau),\ \bar{\eta}^{\tau}(x,s)=\eta_1^\tau-\eta_2^\tau,
	\end{aligned}
	\right.
\end{equation}
Multiplying both sides of equation in (\ref{eq4}) by $-\Delta\bar{u}$ and integrating it from $\Omega$, we get
\begin{align}\label{c19}
	\frac{d}{dt}&\left(\Vert \nabla \bar{u}\Vert^2+\varepsilon(t)\Vert\Delta \bar{u}\Vert^2+\Vert\bar{\eta}^t\Vert^2_{\mu,2}\right)+(2-\varepsilon^{\prime}(t))\Vert \Delta \bar{u}\Vert^2+\rho\Vert\bar{\eta}^t\Vert^2_{\mu,2}\notag\\
	&\leq-2\langle f(u_1)-f(u_2),-\Delta \bar{u}\rangle.
\end{align}
In light of (\ref{c4}), we know
\begin{align}\label{c20}
	-2\langle f(u_1)-f(u_2),-\Delta \bar{u}\rangle&\leq -2\langle f^\prime(\theta u_1+(1-\theta) u_2)\bar{u},-\Delta \bar{u}\rangle\notag\\
	&\leq 2l\Vert \nabla \bar{u}\Vert^2.
\end{align}
Using $\varepsilon^\prime(t)<0$ and (\ref{c20}), (\ref{c19}) can be written as
$$\frac{d}{dt}\Vert \bar{z}(t)\Vert^2_{\mathcal{M}_t^1}\leq 2l\Vert \nabla \bar{u}\Vert^2\leq 2l\Vert \bar{z}(t)\Vert^2_{\mathcal{M}_t^1}.$$
Applying the $Gronwall$ lemma on $[\tau,t]$, it's possible to draw the conclusion
\begin{equation*}
	\Vert \bar{z}(t)\Vert^2_{\mathcal{M}_t^1}\leq e^{2l(t-\tau)}\Vert \bar{z}(\tau)\Vert^2_{\mathcal{M}_\tau^1},
\end{equation*}
which proves the uniqueness of strong solutions. 

Thus, we complete the proof of Theorem \ref{thm1}
\end{proof}

\section{Time-dependent global attractors for strong solutions}
According to Theroy \ref{thm1}, we can define a continuous process $\{U(t,\tau)\}_{t\geq\tau}$ generated by Eqs.(\ref{eq2.1})-(\ref{eq2.2}) as $$U(t,\tau):\mathcal{M}_\tau^1\rightarrow \mathcal{M}_t^1,\ t\geq\tau\in \mathbb{R},$$ which acts as $$U(t,\tau)z_\tau=U(t,\tau)(u_\tau,\eta^\tau)=z(t)=(u,\eta^t).$$

\subsection{Absorbing sets in $\mathcal{M}_t^1$}
\begin{Lem}\label{lem6}
	Suppose that $(\mathbf{H}_1)-(\mathbf{H}_3)$ hold. If $z(t)$ is a strong solution to Eqs.(\ref{eq2.1})-(\ref{eq2.2}) with initial value $z_\tau\in\mathcal{M}_\tau^1$, then there exist a constant $Q\geq 0$ such that
	$$\Vert U(t,\tau)z_\tau\Vert^2_{\mathcal{M}_t^1}\leq e^{-\alpha(t-\tau)}\Vert z_\tau\Vert^2_{\mathcal{M}_\tau^1}+Q,\ \forall\ t\geq\tau.$$
\end{Lem}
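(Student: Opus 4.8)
The statement is, in essence, estimate \eqref{24} read off for the limit solution $z(t)=U(t,\tau)z_\tau=(u,\eta^t)$ rather than for the Galerkin sequence $z_m$, so the plan is to redo (equivalently, to pass to the limit in) the computation leading to \eqref{24} and then to check that the additive constant can be chosen independently of $t$. Concretely, I would test the first equation of \eqref{eq2.1} with $-\Delta u$ and the second with $-\Delta\eta^t$ in $L^2_\mu(\mathbb{R}^+;H_1)$, add the two identities, and arrive at exactly \eqref{c9} with $(u_m,\eta^t_m,g_m)$ replaced by $(u,\eta^t,g)$. This step is only formal for a general strong solution (the pairing $\langle\Delta u_t,\Delta u\rangle$ asks for more regularity than $z$ is known to have), but it is made rigorous in the standard way: each estimate below is first carried out on the approximations, where it is legitimate, and then transferred to $z$ by the weak lower semicontinuity of $\|\cdot\|_{\mathcal{M}_t^1}$ together with the convergences collected in Step 3 of the proof of Theorem \ref{thm1}.

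From that identity the argument runs exactly as in the proof of Theorem \ref{thm1}: by the dissipativity bound \eqref{c4}, $-2\langle f(u),-\Delta u\rangle=-2\langle f'(u)\nabla u,\nabla u\rangle\le 2l\|\nabla u\|^2$, which the lower-order a priori estimate \eqref{c7} bounds by $2l\mathcal{K}_2$; Young's inequality gives $2\langle g,-\Delta u\rangle\le\|g\|^2+\|\Delta u\|^2$; Lemma \ref{lem1} gives $2\langle\eta^t,\eta^t_s\rangle_{\mu,2}\ge\rho\|\eta^t\|^2_{\mu,2}$; and $\varepsilon'(t)\le0$ makes $(2-\varepsilon'(t))\ge2$, so that after absorbing the $\|\Delta u\|^2$ produced by Young's inequality a full copy of $\|\Delta u\|^2$ remains, which one splits via $\widetilde{\lambda}\|\nabla u\|^2\le\|\Delta u\|^2$ and $\varepsilon(t)\le L$. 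With $\alpha=\min\{\widetilde{\lambda}/2,\,1/(2L),\,\rho\}$ (the same $\alpha$ as in Section 3) this yields
\begin{equation*}
	\frac{d}{dt}\|z(t)\|^2_{\mathcal{M}_t^1}+\alpha\|z(t)\|^2_{\mathcal{M}_t^1}\le 2l\mathcal{K}_2+\|g\|^2 ,
\end{equation*}
and the Gronwall lemma on $[\tau,t]$ then gives $\|z(t)\|^2_{\mathcal{M}_t^1}\le e^{-\alpha(t-\tau)}\|z_\tau\|^2_{\mathcal{M}_\tau^1}+\frac{1}{\alpha}(2l\mathcal{K}_2+\|g\|^2)(1-e^{-\alpha(t-\tau)})$, which is the claim with $Q:=\frac{1}{\alpha}(2l\mathcal{K}_2+\|g\|^2)$.

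The point that needs genuine care --- and the only real obstacle --- is that $Q$ must not depend on $t$ (nor on the length of $[\tau,t]$), since otherwise the family $\{B_t\}$ it generates would fail to be uniformly bounded and hence would not be a time-dependent absorbing set in the sense of Section 2. This comes down to the bound used for the nonlinear term: should \eqref{c7} only be available with a constant that grows in $T$, I would instead keep the nonlinear term as $2l\|\nabla u\|^2$, estimate it by interpolation, $2l\|\nabla u\|^2\le\frac12\|\Delta u\|^2+2l^2\|u\|^2$, absorb $\frac12\|\Delta u\|^2$ on the left, and dominate $\|u(t)\|^2$ by the genuinely dissipative lower-order energy estimate $\|z(t)\|^2_{\mathcal{M}_t}\le e^{-\alpha(t-\tau)}\|z_\tau\|^2_{\mathcal{M}_\tau}+C_0$; a short rearrangement then restores an inequality of the stated form with a time-independent $Q$. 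A secondary bookkeeping point is that the passage from $z_m$ to $z$ uses $z_m(t)\rightharpoonup z(t)$ in $\mathcal{M}_t^1$ for each fixed $t$ --- guaranteed by $u_m\to u$ in $C(I;\mathcal{H}_t^\gamma)$ and the uniform bound of $z_m$ in $L^\infty(I;\mathcal{M}_t^1)$ --- and $\|z_m(\tau)\|_{\mathcal{M}_\tau^1}\le\|z_\tau\|_{\mathcal{M}_\tau^1}$, since $z_m(\tau)=(P_m u_\tau,Q_m\eta_\tau)$ is an orthogonal projection of $z_\tau$.
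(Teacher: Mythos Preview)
Your proposal is correct and follows exactly the paper's approach: the paper's entire proof reads ``Multiplying both sides of equation in (\ref{eq2.1}) by $-\Delta u$ and repeating the arguments used in the proof of (\ref{24}), we easily get the result.'' You have simply written out that repetition in full, and your additional remarks on rigor (passing to the limit from the Galerkin level via weak lower semicontinuity) and on the time-independence of $Q$ (with a clean interpolation fix should $\mathcal{K}_2$ genuinely depend on $T$) are more careful than what the paper itself provides.
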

\begin{proof}
	Multiplying both sides of equation in (\ref{eq2.1}) by $-\Delta u$ and repeating the arguments used in the proof of (\ref{24}), we easily get the result.
\end{proof}

\begin{Lem}\label{lem7}
	Suppose that $(\mathbf{H}_1)-(\mathbf{H}_3)$ hold, $z(t)$ is a strong solution to Eqs.(\ref{eq2.1})-(\ref{eq2.2}) with initial value $z_\tau\in\mathcal{M}_\tau^1$. There exists a constant $R_0>0$, such that the family $\mathscr{B}=\{\mathbb{B}_t(R_0)\}_{t\in \mathbb{R}}$ is a time-dependent absorbing set for the process $U(t,\tau)$.
\end{Lem}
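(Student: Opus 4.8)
The plan is to deduce the existence of a time-dependent absorbing set directly from the dissipative estimate in Lemma \ref{lem6}. From that lemma we have, for every strong solution and every $t\geq\tau$,
\beq
\Vert U(t,\tau)z_\tau\Vert^2_{\mathcal{M}_t^1}\leq e^{-\alpha(t-\tau)}\Vert z_\tau\Vert^2_{\mathcal{M}_\tau^1}+Q,
\eeq
with $\alpha>0$ and $Q\geq0$ independent of $\tau$ and $t$. The idea is then the standard one: pick the radius $R_0$ so that the "$Q$-part" is comfortably contained, and absorb the exponentially decaying "initial-data part" after a sufficiently long time.

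Concretely, I would set $R_0^2 = Q+1$ (any fixed number strictly larger than $Q$ works) and define $\mathscr{B}=\{\mathbb{B}_t(R_0)\}_{t\in\mathbb{R}}$. First I would check that $\mathscr{B}$ is uniformly bounded in the sense of the second definition in Section 2: this is immediate since $\mathbb{B}_t(R_0)\subset\mathbb{B}_t(R_0)$ for all $t$, with the same $R_0$. Next, given any $R>0$, I take $z_\tau\in\mathbb{B}_\tau(R)$, so $\Vert z_\tau\Vert^2_{\mathcal{M}_\tau^1}\leq R^2$, and plug into the estimate above to get
\beq
\Vert U(t,\tau)z_\tau\Vert^2_{\mathcal{M}_t^1}\leq e^{-\alpha(t-\tau)}R^2+Q.
\eeq
Choosing $t_0=t_0(R)$ large enough that $e^{-\alpha t_0}R^2\leq 1$, i.e. $t_0=\frac{1}{\alpha}\log^{+}(R^2)$, we obtain for all $t-\tau\geq t_0$ that $\Vert U(t,\tau)z_\tau\Vert^2_{\mathcal{M}_t^1}\leq 1+Q=R_0^2$, hence $U(t,\tau)z_\tau\in\mathbb{B}_t(R_0)=B_t$. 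Since $z_\tau\in\mathbb{B}_\tau(R)$ was arbitrary, this gives $U(t,\tau)\mathbb{B}_\tau(R)\subset B_t$ for all $t-\tau\geq t_0$, which is exactly the defining property of a time-dependent absorbing set (Definition in Section 2).

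There is no serious obstacle here: the only subtlety worth a sentence is that the estimate of Lemma \ref{lem6} must hold with constants $\alpha,Q$ that do not depend on $\tau$, which is the case because they arise from assumptions $(\mathbf{H}_1)$–$(\mathbf{H}_3)$ and the uniform bound \eqref{c2} on $\varepsilon$ and $\varepsilon'$, not from the initial time; I would remark on this explicitly so that $t_0$ depends only on $R$. Thus $\mathscr{B}=\{\mathbb{B}_t(R_0)\}_{t\in\mathbb{R}}$ with $R_0=\sqrt{Q+1}$ is a time-dependent absorbing set for $U(t,\tau)$, completing the proof.
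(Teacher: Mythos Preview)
Your proposal is correct and follows essentially the same approach as the paper: both apply the dissipative estimate of Lemma \ref{lem6} to bound $\Vert U(t,\tau)z_\tau\Vert^2_{\mathcal{M}_t^1}$ by $e^{-\alpha(t-\tau)}R^2+Q$ and then choose a waiting time $t_0(R)$ so that the exponential term drops below a fixed constant. The only cosmetic difference is in the constants chosen---the paper takes $R_0^2=2Q+1$ with $t_0=\max\{0,\tfrac{1}{\alpha}\ln\tfrac{R^2}{1+Q}\}$, whereas you take $R_0^2=Q+1$ with $t_0=\tfrac{1}{\alpha}\log^+(R^2)$---but the argument is the same.
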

\begin{proof}
	Let $R_0^2=2Q+1$, for any initial value $z_\tau\in \mathbb{B}_\tau(R)$, it follows from Lemma \ref{lem6} that
	\begin{equation*}
		\Vert U(t,\tau)z_\tau\Vert^2_{\mathcal{M}_t^1}\leq e^{-\alpha(t-\tau)}R^2 +Q.
	\end{equation*}
	Choosing $t_0=max\{0,\frac{1}{\alpha}\ln\frac{R^2}{1+Q}\}$, then for all $t\geq t_0+\tau$, we have
	\begin{equation}\label{l4}
		\Vert U(t,\tau)z_\tau\Vert^2_{\mathcal{M}_t^1}\leq 1+Q+Q=2Q+1=R_0^2.
	\end{equation}
	Letting $\mathscr{B}=\{\mathbb{B}_t(R_0)\}_{t\in \mathbb{R}}$, then
	\begin{equation*}
		U(t,\tau)\mathbb{B}_\tau(R)\subset \mathbb{B}_t(R_0),\ \forall t-\tau\geq t_0.
	\end{equation*} 
\end{proof}

\subsection{Time-dependent attractors}
Based on the above discussion, we give the most important conclusion of this article.
\begin{Thm}
	Under the same assumptions as in Lemma \ref{lem7}, the process $\{U(t,\tau)\}_{t\geq\tau}$ generated by Eqs.(\ref{eq2.1})-(\ref{eq2.2}) is pullback asymptotic compact in $\mathcal{M}_t^1$.
\end{Thm}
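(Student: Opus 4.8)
The plan is to verify the hypotheses of Lemma \ref{lem5}: since Lemma \ref{lem7} already provides a time-dependent absorbing set $\mathscr{B}=\{\mathbb{B}_t(R_0)\}_{t\in\mathbb{R}}$, it suffices to produce, for each $\epsilon>0$, some $\tau_0=\tau_0(\epsilon)<t$ and a contractive function $\psi^t_{\tau_0}\in\hat{C}(B_{\tau_0})$ such that
\[
\Vert U(t,\tau_0)x-U(t,\tau_0)y\Vert^2_{\mathcal{M}_t^1}\leq \epsilon+\psi^t_{\tau_0}(x,y),\qquad\forall x,y\in B_{\tau_0}.
\]
First I would take two strong solutions $z_1=(u_1,\eta_1^t)$, $z_2=(u_2,\eta_2^t)$ starting from data in the absorbing ball $B_{\tau_0}$, write $\bar z=(\bar u,\bar\eta^t)$ as in the uniqueness part of Theorem \ref{thm1}, and test the difference equation \eqref{eq4} with $-\Delta\bar u$. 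Instead of crudely bounding $-2\langle f(u_1)-f(u_2),-\Delta\bar u\rangle$ by $2l\Vert\nabla\bar u\Vert^2$ as in the uniqueness proof, I would keep a dissipative term: using \eqref{c4} write $f(u_1)-f(u_2)=f'(\xi)\bar u$ with $\xi$ between $u_1$ and $u_2$, integrate by parts to get $-\langle f'(\xi)\bar u,-\Delta\bar u\rangle=-\langle f'(\xi)\nabla\bar u,\nabla\bar u\rangle-\langle f''(\xi)\nabla\xi\,\bar u,\nabla\bar u\rangle$, and combine the good Laplacian term $(2-\varepsilon'(t))\Vert\Delta\bar u\Vert^2$ together with Lemma \ref{lem1} to produce a differential inequality of the form
\[
\frac{d}{dt}\Vert\bar z(t)\Vert^2_{\mathcal{M}_t^1}+\alpha\Vert\bar z(t)\Vert^2_{\mathcal{M}_t^1}\leq \Phi^t_{\tau_0}(z_1,z_2),
\]
where $\Phi$ collects the lower-order terms that do not come with a sign — in particular the term involving $f''(\xi)\nabla\xi$ — handled by Hölder/Young and the uniform bounds \eqref{c6}-\eqref{c8}. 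Applying Gronwall on $[\tau_0,t]$ gives
\[
\Vert U(t,\tau_0)z_1-U(t,\tau_0)z_2\Vert^2_{\mathcal{M}_t^1}\leq e^{-\alpha(t-\tau_0)}\Vert z_1(\tau_0)-z_2(\tau_0)\Vert^2_{\mathcal{M}_{\tau_0}^1}+\int_{\tau_0}^t e^{-\alpha(t-r)}\Phi^r_{\tau_0}(z_1,z_2)\,dr.
\]
Since the initial data lie in the bounded ball $B_{\tau_0}$, the first term is $\leq e^{-\alpha(t-\tau_0)}(2R_0)^2$, which can be made $<\epsilon$ by choosing $\tau_0$ sufficiently negative (i.e. $t-\tau_0$ large); this is what fixes $\tau_0=\tau_0(\epsilon)$. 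It remains to set $\psi^t_{\tau_0}(z_1,z_2):=\int_{\tau_0}^t e^{-\alpha(t-r)}\Phi^r_{\tau_0}(z_1,z_2)\,dr$ and check $\psi^t_{\tau_0}\in\hat C(B_{\tau_0})$.

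The main obstacle is precisely verifying that $\psi^t_{\tau_0}$ is a contractive function in the sense of the definition preceding Lemma \ref{lem1}: given any sequence $\{z_n(\tau_0)\}\subset B_{\tau_0}$ I must extract a subsequence along which $\lim_k\lim_l\psi^t_{\tau_0}(z_{n_k},z_{n_l})=0$. This requires that every term appearing in $\Phi$ be of the form (time integral of) a quantity that is \emph{compactly} controlled — e.g. norms of $\bar u$ or $\nabla\bar u$ in spaces into which $\mathcal{H}_t^1$ embeds compactly (recall $\mathcal{H}_t^\sigma\hookrightarrow\mathcal{H}_t$), together with memory-variable differences in $L^2_\mu(\mathbb{R}^+;H_r)$ for $r<2$. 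Here I would use that $\{u_n\}$ is bounded in $L^\infty(I;\mathcal{H}_t^1)$ with $\{u_{nt}\}$ bounded in $L^2(I;\mathcal{H}_t)$, so by Lemma \ref{lem2} $u_n\to u$ strongly in $C(I;\mathcal{H}_t^\gamma)$ for $0<\gamma<1$; passing to a diagonal subsequence makes the differences $u_{n_k}-u_{n_l}$ small in these intermediate norms, hence each lower-order term in $\psi^t_{\tau_0}$ tends to zero in the iterated limit. The genuinely delicate points are (a) showing the nonlinear term $\int_{\tau_0}^t\langle f''(\xi)\nabla\xi\,(u_{n_k}-u_{n_l}),\nabla(u_{n_k}-u_{n_l})\rangle$-type contribution is controlled after using the uniform $L^p$ bound on $u_n$ and an appropriate Hölder split so that only a sub-critical Sobolev norm of the difference is needed, and (b) handling the memory component, for which one bounds $\Vert\bar\eta^t\Vert^2_{\mu,2}$ via a Cauchy argument on the equation $\bar\eta^t_t=-\bar\eta^t_s+\bar u$ reducing it to the $\bar u$-contributions already treated. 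Assembling these, Lemma \ref{lem5} yields pullback asymptotic compactness of $\{U(t,\tau)\}_{t\geq\tau}$ in $\mathcal{M}_t^1$, which is the assertion of the theorem.
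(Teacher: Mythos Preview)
Your overall architecture---verify Lemma~\ref{lem5} by testing the difference equation with $-\Delta\bar u$, applying Gronwall, splitting into an exponentially decaying term (made $<\epsilon$ by choosing $\tau_0$) and a contractive remainder---is exactly the paper's. The divergence is in how you treat the nonlinear term, and there your proposal is both more complicated and technically unsound.

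You explicitly reject the ``crude'' estimate $-2\langle f(u_1)-f(u_2),-\Delta\bar u\rangle\leq 2l\Vert\nabla\bar u\Vert^2$ in favor of integrating by parts and producing terms with $f''(\xi)\nabla\xi$. But the paper uses precisely the crude estimate, and it is enough: after keeping the full dissipation $(2-\varepsilon'(t))\Vert\Delta\bar u\Vert^2+\rho\Vert\bar\eta^t\Vert^2_{\mu,2}$ on the left and invoking $\widetilde\lambda\Vert\nabla\bar u\Vert^2\leq\Vert\Delta\bar u\Vert^2$, one obtains
\[
\frac{d}{dt}\Vert\bar z\Vert^2_{\mathcal{M}_t^1}+\alpha\Vert\bar z\Vert^2_{\mathcal{M}_t^1}\leq 2l\Vert\nabla\bar u\Vert^2,
\]
so that after Gronwall the entire contractive function is simply $\psi^t_{\tau_0}(z_1,z_2)=2l\int_{\tau_0}^t e^{-\alpha(t-r)}\Vert\nabla\bar u(r)\Vert^2\,dr$. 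This single lower-order term is handled by the compact embedding $\mathcal{H}_t^1\hookrightarrow\mathcal{H}_t$ (equivalently $H_2\hookrightarrow\hookrightarrow H_1$) together with the Aubin--Lions type Lemma~\ref{lem2}; no memory contribution appears in $\psi$, so your point~(b) never arises.

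Your alternative route has a genuine gap: the hypotheses only give $f\in C^1(\mathbb{R})$, so $f''$ need not exist, and even if it did, the intermediate point $\xi=\xi(x,t)$ furnished by the mean value theorem is merely a measurable selection between $u_1$ and $u_2$---it has no reason to possess a weak gradient, so the integration by parts $\langle f'(\xi)\bar u,-\Delta\bar u\rangle=\langle f'(\xi)\nabla\bar u,\nabla\bar u\rangle+\langle f''(\xi)\nabla\xi\,\bar u,\nabla\bar u\rangle$ is not justified. Drop that step, keep the crude bound, and your proof collapses to the paper's.
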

\begin{proof}
	Let $z_m=(u_m,\eta_m^t),\ z_n=(u_n,\eta_n^t)$ be the corresponding two solutions to Eqs.(\ref{eq2.1})-(\ref{eq2.2}) with initial data $z_m(\tau),\ z_n(\tau)$, set $\hat{z}=z_m-z_n=(u_m-u_n,\eta_m^t-\eta_n^t)=(\kappa,\zeta^t)$, then we infer from Eqs.(\ref{eq2.1})-(\ref{eq2.2}) that
	\begin{equation}\label{eq5}
		\left\{
		\begin{aligned}
			&\kappa_{t} -\varepsilon(t)\Delta{\kappa_{t}}-\Delta{\kappa}-\int_0^{\infty} \mu(s)\Delta{\zeta^{t}(s)}ds+f(u_m)-f(u_n)=0,\\
			&\zeta^{t}_{t}=-\zeta^{t}_{s}+\kappa,\\
			&\kappa(x,t)|_{\partial\Omega}=0,\ \zeta^{t}(x,s)|_{\partial\Omega\times \mathbb{R}^{+}}=0,\ t\geq \tau,\\
			&\kappa(x,\tau)=u_m(\tau)-u_n(\tau),\ \zeta^{\tau}(x,s)=\eta_m^\tau-\eta_n^\tau.
		\end{aligned}
		\right.
	\end{equation}
    Multiplying both sides of equation in (\ref{eq5}) by $-\Delta\kappa$ and integrating it from $\Omega$, we obtain
    \begin{align}\label{c25}
    	\frac{d}{dt}&\left(\Vert \nabla \kappa\Vert^2+\varepsilon(t)\Vert\Delta \kappa\Vert^2+\Vert\zeta^t\Vert^2_{\mu,2}\right)+(2-\varepsilon^{\prime}(t))\Vert \Delta \kappa\Vert^2+\rho\Vert\zeta^t\Vert^2_{\mu,2}\notag\\
    	&\leq-2\langle f(u_m)-f(u_n),-\Delta \kappa\rangle.
    \end{align}
    In view of (\ref{c4}), we know
    \begin{align}
    	-2\langle f(u_m)-f(u_n),-\Delta \kappa\rangle&\leq -2\langle f^\prime(\theta u_m+(1-\theta) u_n)\kappa,-\Delta \kappa\rangle\notag\\
    	&\leq 2l\Vert \nabla\kappa\Vert^2.
    \end{align}
    Due to the embedding $H_2\hookrightarrow H_1$, we can write $\widetilde{\lambda}\Vert \nabla \kappa\Vert^2\leq\Vert \Delta \kappa\Vert^2$, then (\ref{c25}) can be written as
    \begin{align}\label{c26}
    	\frac{d}{dt}&\left(\Vert \nabla \kappa\Vert^2+\varepsilon(t)\Vert\Delta \kappa\Vert^2+\Vert\zeta^t\Vert^2_{\mu,2}\right)+\widetilde{\lambda}\Vert \nabla \kappa\Vert^2+\Vert \Delta \kappa\Vert^2+\rho\Vert\zeta_m^t\Vert^2_{\mu,2}\notag\\
    	&\leq 2l\Vert \nabla\kappa\Vert^2.
    \end{align}
    Setting $\alpha=\min\left\{\widetilde{\lambda},\frac{1}{L},\rho\right\}$, we have 
    \begin{equation}
    	\frac{d}{dt}\Vert \hat{z}(t)\Vert^2_{\mathcal{M}^1_t}+\alpha\Vert \hat{z}(t)\Vert^2_{\mathcal{M}^1_t}\leq 2l\Vert \nabla\kappa\Vert^2.
    \end{equation}
    Applying the $Gronwall$ lemma on $[\tau,t]$, we obtain
    \begin{equation}\label{27}
    	\Vert \hat{z}(t)\Vert^2_{\mathcal{M}_t^1}\leq e^{-\alpha(t-\tau)}\Vert \hat{z}(\tau)\Vert^2_{\mathcal{M}_\tau^1}+2le^{-\alpha t}\int_\tau^{T} e^{\alpha r}\Vert \nabla\kappa(r)\Vert^2 dr,
    \end{equation}
    \begin{equation}
    	\Vert U(t,\tau)z_m(\tau)-U(t,\tau)z_n(\tau)\Vert^2_{\mathcal{M}_t^1}\leq e^{-\alpha(t-\tau)}\Vert z_m(\tau)-z_n(\tau)\Vert^2_{\mathcal{M}_\tau^1}+2l\int_\tau^{T} \Vert \nabla u_m(r)-\nabla u_n(r)\Vert^2 dr.
    \end{equation}

    On one hand, we set $\varPsi_\tau^t(z_m,z_n)=2l\int_\tau^{T} \Vert \nabla u_m(r)-\nabla u_n(r)\Vert^2 dr$, 
    due to the compact embedding $\mathcal{H}_t^{1}\hookrightarrow\mathcal{H}_t$, the boundness of $u_n$ in $\mathcal{H}_t^{1}$ and $u_n\in C(I;\mathcal{H}_t^{1})$, we easily know there is a subsequence $u_{nk}$ of $u_n$, which is convergent in $\mathcal{H}_t$, that is,
    $$\lim_{k\to\infty}\lim_{l\to\infty}\int_\tau^{T} \Vert u_{nk}(r)-u_{nl}(r)\Vert^2_1 dr=0.$$ 
    Thus, $\varPsi_\tau^t(z_m,z_n)$ is a contrative function.
    
    On the other hand, for any $\epsilon>0$, and any fixed $t\in \mathbb{R}$, set $\tau_0=t-\frac{1}{\alpha}\ln\frac{\Vert z_m(\tau)-z_n(\tau)\Vert^2_{\mathcal{M}_\tau^1}}{\epsilon^2}$, we are able to show without much difficulty that the process $\{U(t,\tau)\}_{t\geq\tau}$ is pullback asymptotically compact in $\mathcal{M}_t^1$ based on Lemma \ref{lem5}.
    
    Finally, combining Lemma \ref{lem4} and Lemma \ref{lem7}, we know that the process $\{U(t,\tau)\}_{t\geq\tau}$ has a time-dependent global attractor for strong solutions in $\mathcal{M}_t^1$.
\end{proof}

\section*{Acknowledgements}
This paper was in part supported by the National Natural Science Foundation of China with contract number 12171082, the fundamental research funds for the central universities with contract numbers 2232022G-13, 2232023G-13 and by a grant from science and technology commission of Shanghai municipality.

\end{document}